\newtheorem{theorem}{Theorem}[section]
\newtheorem{proposition}[theorem]{Proposition}
\theoremstyle{definition}
\newtheorem{definition}[theorem]{Definition}
\newtheorem{example}[theorem]{Example}
\theoremstyle{remark}
\newtheorem{remark}[theorem]{Remark}
\numberwithin{equation}{section}
\begin{document}
\setcounter{page}{1}

\title[ $J$-frame sequences in Krein Space]{ $J$-frame sequences in Krein Space}

\author[S. Karmakar, Sk M. Hossein and K. Paul]{Shibashis Karmakar$^1$, Sk. Monowar Hossein$^2$ and Kallol Paul$^3$}

\address{$^{1}$ Department of Mathematics, Jadavpur University, Jadavpur-32, West Bengal, India.}
\email{\textcolor[rgb]{0.00,0.00,0.84}{shibashiskarmakar@gmail.com}}

\address{$^{2}$ Department of Mathematics, Aliah University, IIA/27 New Town, Kolkata-156, West Bengal, India.}
\email{\textcolor[rgb]{0.00,0.00,0.84}{sami$\_$milu@yahoo.co.uk}}

\address{$^{3}$ Department of Mathematics, Jadavpur University, Jadavpur-32, West Bengal, India.}
\email{\textcolor[rgb]{0.00,0.00,0.84}{kalloldada@gmail.com}}

\subjclass[2010]{Primary 42C15; Secondary 46C05, 46C20.}

\keywords{Krein Space, $J$-frame, $J$-projection, Bessel sequence.}


\begin{abstract}
Let $\{f_n:n\in\mathbb{N}\}$ be a $J$-frame for a Krein space ${\textbf{\textit{K}}}$ and  $P_M$ be a $J$-orthogonal projection from ${\textbf{\textit{K}}}$ onto a subspace $M$. In this article we find sufficient conditions under which $\{P_M(f_n):n\in\mathbb{N}\}$ is  a $J$-frame for $P_M\textbf{\textit{K}}$ and $\{(I-P_M)f_n\}_{n\in{\mathbb{N}}}$ is a $J$-frame for $(I-P_M)\textbf{\textit{K}}$. We also introduce $J$-frame sequence for a Krein space ${\textbf{\textit{K}}}$ and study some properties of $J$-frame sequence analogues to Hilbert space frame theory.
\end{abstract} 
\maketitle

\section{Introduction and preliminaries}

Let $\textbf{\emph{H}}$ be a real or complex Hilbert space. A sequence $\{f_n:n\in\mathbb{N}\}$ is said to be a frame for a Hilbert space $\textbf{\emph{H}}$ if there exists positive reals $A$ and $B$ s.t. $A\|f\|^2\leq\sum_{n\in\mathbb{N}}|\langle{f},f_n\rangle|^2\leq{B\|f\|^2}$ for all $f\in{\textbf{\emph{H}}}$. Frame for Hilbert spaces was defined by Duffin and Schaeffer \cite{ds} in 1952 to study some problems in nonharmonic Fourier series. Daubechies \textit{et. al.} \cite{dgm} published a landmark paper in 1986 in this direction while working on wavelets and signal processing. After their work, the theory of frames begun to be more widely studied. Powerful tools from Operator theory and Banach space theory are being used to study frames in Hilbert spaces and it produced some deep results in Frame theory.
Many researchers studied frame in different aspects and applications \cite{oc, pgc, pgcgt, pcs, phl}. Krein space theory \cite{jb} is also rich in application among many areas of Mathematics. Some known areas of application are in High energy physics, Quantam cosmology, Krein space filtering etc. In the year 2011, Giribet\textit{ et. al.} \cite{gmmm} introduced frame
in Krein spaces which is known as $J$-frame. Recently Esmeral \textit{et. al.} \cite{koe} defined frames in Krein spaces in a more general setting.

The projection methods play a central role in almost every branches of Mathematics including Hilbert space frame theory. This method helps in evaluating truncation error which arises in computing approximate solutions to moment problems, as well as handling the very difficult problem of computing dual frames \cite{ochr}. The following result plays an  important role in Hilbert space frame theory:
\begin{theorem}\cite{oc}
Let $\{f_n:n\in\mathbb{N}\}$ be a frame for a Hilbert space $\textbf{\emph{H}}$ with lower frame bound $A$ and upper frame bound $B$. Also let $P$ be an orthogonal projection from $\textbf{\emph{H}}$ onto $V$. Then $\{P(f_n):n\in\mathbb{N}\}$ and $\{(I-P)(f_n):n\in\mathbb{N}\}$ are also frames for $V$ and $V^\perp$ respectively, both admitting the same optimal frame bounds.

Conversely, if $\{f_n\}_{n\in{\mathbb{N}}}$ is a frame for $P(\textbf{\emph{H}})$ and $\{g_n\}_{n\in{\mathbb{N}}}$ is a frame for $(I-P)\textbf{\emph{H}}$, both with frame bounds $0<A\leq{B}$, then $\{f_n\}_{n\in{\mathbb{N}}}\bigcup{\{g_n\}_{n\in{\mathbb{N}}}}$ is a frame for $\textbf{\emph{H}}$ admitting the same frame bounds.
\end{theorem}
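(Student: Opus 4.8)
The plan is to reduce both directions to the defining frame inequality for $\{f_n\}$ by exploiting the two defining features of an orthogonal projection: self-adjointness ($P^*=P$) and idempotency ($P^2=P$), together with the Pythagorean decomposition $\|f\|^2=\|Pf\|^2+\|(I-P)f\|^2$. For the forward direction, the key computation I would carry out first is that for any $f\in V=P(\textbf{\emph{H}})$ one has $\langle f,P(f_n)\rangle=\langle Pf,f_n\rangle=\langle f,f_n\rangle$, since $P^*=P$ and $Pf=f$ on $V$. Consequently $\sum_{n}|\langle f,P(f_n)\rangle|^2=\sum_{n}|\langle f,f_n\rangle|^2$, and because $f\in V\subseteq\textbf{\emph{H}}$ the original frame inequality gives
\[
A\|f\|^2\le\sum_{n}|\langle f,P(f_n)\rangle|^2\le B\|f\|^2,
\]
so $\{P(f_n)\}$ is a frame for $V$ with bounds $A,B$. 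The identical argument applied to $I-P$ (again self-adjoint and idempotent, with range $V^\perp$) shows $\{(I-P)(f_n)\}$ is a frame for $V^\perp$ with the same bounds.

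For the converse, I would fix $f\in\textbf{\emph{H}}$ and write $f=Pf+(I-P)f$ with $Pf\in V$ and $(I-P)f\in V^\perp$. Since each $f_n$ lies in $V$ and each $g_n$ lies in $V^\perp$, orthogonality kills the cross terms: $\langle f,f_n\rangle=\langle Pf,f_n\rangle$ and $\langle f,g_n\rangle=\langle (I-P)f,g_n\rangle$. Applying the frame hypotheses for $\{f_n\}$ on $V$ and $\{g_n\}$ on $V^\perp$ to $Pf$ and $(I-P)f$ respectively and adding the two double inequalities yields
\[
A\big(\|Pf\|^2+\|(I-P)f\|^2\big)\le\sum_{n}|\langle f,f_n\rangle|^2+\sum_{n}|\langle f,g_n\rangle|^2\le B\big(\|Pf\|^2+\|(I-P)f\|^2\big).
\]
Invoking the Pythagorean identity to replace the parenthesized sum by $\|f\|^2$ gives precisely the frame inequality for the combined family $\{f_n\}\cup\{g_n\}$ with bounds $A,B$.

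The genuinely delicate point is the assertion about \emph{optimal} frame bounds rather than merely some admissible pair. Since $\sum_{n}|\langle f,P(f_n)\rangle|^2=\sum_{n}|\langle f,f_n\rangle|^2$ holds as an exact equality on $V$ (and analogously on $V^\perp$), I expect the optimal bounds of $\{P(f_n)\}$ to be $A_V=\inf_{\|f\|=1,\,f\in V}\sum_{n}|\langle f,f_n\rangle|^2$ and $B_V=\sup$ of the same quantity, so one must argue these infima and suprema over the two complementary subspaces recombine to the global optimal bounds; the Pythagorean splitting in the converse is exactly what makes this recombination clean. The only other mild obstacle is bookkeeping: the union $\{f_n\}\cup\{g_n\}$ should be treated as a single indexed family (interleaving the two sequences, or indexing over a disjoint union of copies of $\mathbb{N}$), so that the two series add to one frame series without convergence issues, both series being unconditionally convergent by the upper bounds.
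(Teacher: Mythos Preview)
The paper does not actually prove this statement: Theorem~1.1 is quoted from Christensen~\cite{oc} as background and no proof is given in the paper itself. Your argument is the standard and correct one; in particular the key identity $\langle f,Pf_n\rangle=\langle Pf,f_n\rangle=\langle f,f_n\rangle$ for $f\in V$ together with the Pythagorean splitting in the converse is exactly how this result is usually proved. For what it is worth, the paper's proof of the Krein-space analogue of the converse (Theorem~3.6) follows precisely the same template as your converse argument: decompose $f=f_1+f_2$ along the $J$-orthogonal splitting, use orthogonality to reduce $[f,f_i]$ and $[f,g_i]$ to inner products with the components, apply the two frame inequalities separately, and add.

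One small remark on the ``optimal bounds'' clause you flagged: as stated in the paper the phrase is loose, and the strongest thing your computation actually yields is that the optimal bounds $A_V,B_V$ of $\{Pf_n\}$ on $V$ satisfy $A\le A_V\le B_V\le B$ (since you are taking inf/sup over a smaller set). Equality need not hold for each piece separately, so your caution there is warranted; the paper does not address this point either.
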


There are two definitions of frame in Krein space, one introduced by  Giribet \textit{et al.} \cite{gmmm} and the other by Esmeral \textit{et al.} \cite{koe}. The above Theorem 1.1 does not hold in Krein space if we follow 
 the definition of Giribet \textit{et. al.} \cite{gmmm}, we have given a counterexample in (3.3) to show that. But the Theorem 1.1 holds for Krein space if we follow the definition of Esmeral \textit{et al.} \cite{koe}.

In this article we provide an example in (2.10) to show that definition in \cite{koe} involves fundamental symmetry of a Krein space, which is not unique. We show that   the definition in  \cite{gmmm} is independent of fundamental symmetry and so  we use the definition given in \cite{gmmm} for our further study.

Let $P_M$ be a $J$-orthogonal projection from a Krein space $\textbf{\textit{K}}$ onto a subspace $M$. We find sufficient condition under which $\{P_M(f_n):n\in\mathbb{N}\}$ is a $J$-frame for the subspace $(M,[~,~])$.  We also introduce $J$-frame sequence for a Krein space $\textbf{\textit{K}}$ and derive some of its important properties similar to what had been done in Hilbert space frame theory.

\section{Basic Definitions}
We briefly mention the definitions, geometric interpretations and some basic properties of Krein spaces that we need for our study \cite{jb,isty,ando}.
\begin{definition}
An abstract vector space $(\textbf{\textit{K}},[~,~])$ that satisfies the following requirements is called a Krein space.
\begin{enumerate}
  \item $\textbf{\textit{K}}$ is a linear space over the field $F$, where $F$ is either $\mathbb{R}$ or $\mathbb{C}$.
  \item there exists a bilinear form $[~,~]\in{F}$ on \textbf{\textit{K}} such that
	\begin{equation*}
	\begin{split}
  [y,x] &=\overline{[x,y]}\\
  [ax+by,z] &=a[x,z]+b[y,z]
	\end{split}
	\end{equation*}
  for any $x,y,z\in{\textbf{\textit{K}}}$, $a,b\in{F}$, where $\overline{[~,~]}$ denote the complex conjugation.
  \item The vector space $\textbf{\textit{K}}$ admits a canonical decomposition $\textbf{\textit{K}}=\textbf{\textit{K}}^+[\dot{+}]\textbf{\textit{K}}^-$ such that $(\textbf{\textit{K}}^+,[~,~])$ and $(\textbf{\textit{K}}^-,-[~,~])$ are Hilbert spaces relative to the norms $\|x\|=[x,x]^{\frac{1}{2}}(x\in{\textbf{\textit{K}}^+})$ and $\|x\|=(-[x,x]^{\frac{1}{2}})(x\in{\textbf{\textit{K}}^-})$.
\end{enumerate}
\end{definition}
Now every canonical decomposition of $\textbf{\textit{K}}$ generates two mutually complementary projectors $P^+$ and $P^-$ ($P^{+}+P^-=I$, the identity operator on $\textbf{\textit{K}}$ ) mapping $\textbf{\textit{K}}$ onto $\textbf{\textit{K}}^+$ and $\textbf{\textit{K}}^-$ respectively. Thus for any $x\in{\textbf{\textit{K}}}$, we have $P^{\pm}=x^{\pm}$, where $x^+\in{\textbf{\textit{K}}^+}$ and $x^-\in{\textbf{\textit{K}}^-}$. The projectors $P^+$ and $P^-$ are called canonical projectors. They are also ortho-projectors i.e. they are orthogonal (self-adjoint) projection operator.

The linear operator $J:\textbf{\textit{K}}\to{\textbf{\textit{K}}}$ defined by the formula $J=P^+-P^-$ is called the canonical symmetry of the Krein space $\textbf{\textit{K}}$. The operator $J$ is a self-adjoint, unitary and involutory operator. The canonical symmetry $J$ immediately generates orthogonal canonical orthoprojectors $P^{\pm}$ according to the formulae $P^{\pm}=\frac{1}{2}(I\pm{J})$ and a canonical decomposition $\textbf{\textit{K}}=\textbf{\textit{K}}^+\oplus\textbf{\textit{K}}^-,~\textbf{\textit{K}}^{\pm}=P^{\pm}\textbf{\textit{K}}$ and also the $J$-metric defined by the formula $[x,y]_{J}=[x,Jy]$, where $x,y\in{\textbf{\textit{K}}}$. The vector space $\textbf{\textit{K}}$ associated with the $J$-metric is a Hilbert space, called the associated Hilbert space of the Krein space $\textbf{\textit{K}}$.
\begin{definition}
A subspace $M$ of a Krein space $\textbf{\textit{K}}$ is said to be $J$-positive if $[x,x]\geq{0}~(x\in{M})$. Further $M$ is said to be uniformly $J$-positive if there is a $\epsilon{>}0~$such that $[x,x]\geq{\epsilon\|x\|^2}~(x\in{M})$. $J$-negativity and $J$-uniform negativity of a subspace is defined as positivity and $J$-uniform positivity with respect to the selfadjoint involution $-J$. By this definition the trivial subspace $\{0\}$ is uniformly $J$-postive as well as uniformly $J$-negative.
\end{definition}
\begin{definition}
Let $(\textbf{\textit{K}},[~.~],J)$ be a Krein space and let $x,y\in\textbf{\textit{K}}$. We say that $x$ is orthogonal to $y$, denoted by $x\perp{y}$, when $[x,y]_J=0$. Similarly, we say that $x$ is $J$-orthogonal to $y$, denoted by $x[\perp]y$, when $[x,y]=0$.

Since the associated space $(\textbf{\textit{K}},[~,~]_J)$ for a Krein space $\textbf{\textit{K}}$ is a Hilbert space, therefore we can study linear operators acting on Krein spaces. Some topological concepts such as continuity and closure of a set, are concerning to the topology induced by the $J$-norm.
\end{definition}
\begin{definition}
Let $M$ be a closed subspace of a Krein space $\textbf{\textit{K}}$. The subspace $M^{[\perp]}=\{x\in\textbf{\textit{K}}:[x,y]=0,~\textmd{for all}~y\in{M}\}$ is the $J$-orthogonal complement of $M$ with respect to $[~.~]$.
\end{definition}
\begin{definition}
Let $M$ be a subspace of a Krein space $\textbf{\textit{K}}$. $M$ is said to be projectively complete if $\textbf{\textit{K}}=M+M^{[\perp]}$.
\end{definition}

The $J$-adjoint of an operator $T$ in Krein spaces, denoted by $T^{[\ast]}$, satisfies $[T(x),y]=[x,T^{[\ast]}(y)]$. However, such $T$ have an adjoint operator in the associated Hilbert space $(\textbf{\textit{K}},[~,~]_J)$, denoted by $T^{\ast{J}}$. Furthermore, there is a relation between $T^{\ast{J}}$ and $T^{[\ast]}$, which is $T^{[\ast]}=JT^{\ast{J}}J$.

Let $P$ be a bounded linear operator on a Hilbert space $\textbf{\emph{H}}$. Also let $\textit{R}(P)=\textmd{Range of the mapping}~P$ and $\textit{Ker}(P)=\textmd{Kernel of the mapping}~P$. Then $P$ is called a projection if $P^2=P$. If $\textit{R}(P)=M$ and $\textit{Ker}(P)=N$, then $P$ is called the projection on $M$ parallel to $N$. Therefore $(I-P)$ becomes the projection on $N$ parallel to $M$.\\
Here 
\begin{definition}
A linear operator on a Krein space $\textbf{\textit{K}}$ is said to be $J$-selfadjoint if $T=T^{[\ast]}$.
\end{definition}

A $J$-self-adjoint projection is called a $J$-projection. If $P$ is a $J$-projection with $\textit{R}(P)=M$ , then it is necessarily the projection on $M$ parallel to $M^{[\perp]}$. Conversely, given a closed subspace $M$, a projection $P$ on $M$ parallel to $M^{[\perp]}$, if exists, is $J$-self-adjoint, hence a $J$-projection.
\begin{definition}
A subspace $M$ is said to be regular if it is the range of a $J$-projection.
\end{definition}
Every regular subspace is closed. If $M$ is regular with $J$-projection $P$, then its $J$-orthocomplement $M^{[\perp]}$ with $J$-projection $(I-P)$ is also regular.

Let $(\textbf{\textit{K}},[~.~],J)$ be a Krein space. Suppose $\textbf{\textit{F}}=\{f_n:n\in{\mathbb{N}}\}$ is a Bessel sequence of $\textbf{\textit{K}}$ and $T\in{L(\ell^2(I),\textbf{\textit{K}})}~(\textmd{where~}\ell^2(I):=\{(c_i):\sum_{i\in{I}}|c_i|^2<\infty\})$ is the synthesis operator for the Bessel sequence $\textbf{\textit{F}}$. Let $I_+=\{i\in{I}:[f_i,f_i]\geq{0}\}$ and $I_-=\{i\in{I}:[f_i,f_i]<0\}$, then $\ell^2(I)=\ell^2(I_+)\bigoplus{\ell^2(I_-)}$. Also let $P_{\pm}$ denote the orthogonal projection of $\ell^2(I)$ onto $\ell^2(I_{\pm})$. Let $T_{\pm}=TP_{\pm}$, $M_{\pm}=\overline{span\{ f_i:i\in{I_{\pm}}\}}$ then we have $R(T)=R(T_+)+R(T_-)$, where $R(T)$ represents range of the operator $T$.
\begin{definition}\cite{gmmm}
 A Bessel sequence $\textbf{\textit{F}}$ is said to be a $J$-frame for $\textbf{\textit{K}}$ if $R(T_+)$ is a maximal uniformly $J$-positive subspace of $\textbf{\textit{K}}$ and $R(T_-)$ is a maximal uniformly $J$-negative subspace of $\textbf{\textit{K}}$.
\end{definition}
We denote by $Q_M$ and $P_M$ the orthogonal and $J$-orthogonal projections on $M$ respectively \textit{i.e.} $Q_M=Q_M^{\ast{J}}=Q_M^2$ and $P_M=P_M^{[\ast]}=P_M^2$. If a subspace $M~(\subset{\textbf{\textit{K}}})$ is projectively complete then it must be closed \textit{i.e.} $M=\overline{M}$ and non-degenerate \textit{i.e.} $M\cap{M^{[\perp]}}=\{0\}$. From Theorem 7.16 of \cite{isty} we know that if $M$ is projectively complete, then $M$ is regular.

Now we state a result which is due to P. Acosta-Hum$\acute{a}$nez \textit{et. al.}\cite{pahko}.
\begin{proposition}
Let $M$ be a closed subspace of $\textbf{\textit{K}}$. The following statements hold.\\
$(i)~$ If $Q_M$ is an orthogonal projection on $M$, then $P_M=Q_{JM}Q_M$ is a $J$-orthogonal projection on $M$.\\
$(ii)~$ If $P_M$ is an $J$-orthogonal projection on $M$, then $Q_M=P_{JM}P_M$ is a orthogonal projection on $M$.
\end{proposition}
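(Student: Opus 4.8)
The plan is to reduce both parts to the single algebraic identity $Q_{JM}=JQ_MJ$ and then to check the defining properties of a ($J$-)orthogonal projection in the order: symmetry first, idempotency and range second. The only inputs I need are the facts recorded in the excerpt: $J$ is a self-adjoint, unitary involution of the associated Hilbert space $(\textbf{\textit{K}},[~,~]_J)$, so $J^{\ast J}=J$ and $J^2=I$; the adjoint relation $T^{[\ast]}=JT^{\ast J}J$ (equivalently $T^{\ast J}=JT^{[\ast]}J$); and the elementary fact that conjugation by a unitary carries the orthogonal projection onto a subspace $N$ to the orthogonal projection onto its image. Applying the last fact to $J$ gives $Q_{JM}=JQ_MJ^{-1}=JQ_MJ$, which is the engine of the whole argument. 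It is also worth recording, via $[x,y]=[x,Jy]_J$, that the $J$-orthogonal complement satisfies $M^{[\perp]}=\ker Q_{JM}$, i.e. it is the $[~,~]_J$-orthogonal complement of $JM$; this is the identity I would use to pin down the kernel of $P_M$.

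For part $(i)$ I would first settle the symmetry, which is purely formal. Writing $P_M=Q_{JM}Q_M=JQ_MJQ_M$ and using $Q_M^{\ast J}=Q_M$, $Q_{JM}^{\ast J}=Q_{JM}$ together with $T^{[\ast]}=JT^{\ast J}J$, one computes $P_M^{[\ast]}=J(Q_{JM}Q_M)^{\ast J}J=JQ_MQ_{JM}J=JQ_M(JQ_MJ)J=JQ_MJQ_M$, where the final step uses only $J^2=I$. Hence $P_M^{[\ast]}=P_M$, so $P_M$ is $J$-self-adjoint; this presents no difficulty.

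The substance of the proof, and where I expect the real obstacle to lie, is idempotency together with the range identity $R(P_M)=M$. Expanding $P_M^2=(JQ_MJQ_M)^2$ and cancelling the (invertible) leading $J$, the condition $P_M^2=P_M$ is equivalent to $SJS=S$ for the self-adjoint operator $S:=Q_MJQ_M$; likewise $R(P_M)\subseteq R(Q_{JM})=JM$, and on $M$ the operator $P_M$ acts as $Q_{JM}|_M$, so the identity $R(P_M)=M$ already demands a tight compatibility between $M$ and $JM$. Neither $SJS=S$ nor $R(P_M)=M$ is automatic for an arbitrary closed $M$, so this is precisely the point at which structural hypotheses on $M$ must enter. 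I would therefore bring in the regularity of $M$ (equivalently $\textbf{\textit{K}}=M+M^{[\perp]}$ with $M\cap M^{[\perp]}=\{0\}$, using that projectively complete implies regular by Theorem 7.16 of \cite{isty}) to control $S$ on $M$ and to identify $\ker P_M$ with $M^{[\perp]}=\ker Q_{JM}$. Determining the exact regularity assumption under which $SJS=S$ holds, and which upgrades $P_M$ from a $J$-self-adjoint operator to a genuine $J$-projection with range $M$, is the crux I would spend the most effort on.

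Part $(ii)$ I would treat as the mirror image, interchanging the roles of the $J$-metric and the Hilbert metric. The analogue of the engine is $P_{JM}=JP_MJ$: this operator is idempotent with range $JM$ (since $(JP_MJ)^2=JP_M^2J=JP_MJ$) and is $J$-self-adjoint by the same $J^2=I$ collapse, using $P_M^{\ast J}=JP_MJ$. Setting $Q_M=P_{JM}P_M=JP_MJP_M$, the relation $T^{\ast J}=JT^{[\ast]}J$ gives $Q_M^{\ast J}=Q_M$ after the identical cancellation, so $Q_M$ is self-adjoint in $(\textbf{\textit{K}},[~,~]_J)$; the remaining idempotency and range statement is again the hard part and is handled exactly as in $(i)$, now reading the regularity of $M$ through its $J$-projection $P_M$ rather than through $Q_M$.
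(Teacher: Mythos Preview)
The paper does not prove this proposition at all: it is quoted verbatim as ``a result which is due to P.\ Acosta-Hum\'anez \textit{et al.}'' and attributed to \cite{pahko}, with no argument supplied. There is therefore no in-paper proof to compare your proposal against.

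On the mathematics itself, your analysis is accurate and in fact more careful than the statement warrants. Your reduction to the identity $Q_{JM}=JQ_MJ$ and the computation $P_M^{[\ast]}=J(Q_MJQ_MJ)J=JQ_MJQ_M=P_M$ are correct and settle $J$-self-adjointness cleanly. You are also right that idempotency and the range identity $R(P_M)=M$ do \emph{not} follow for an arbitrary closed $M$: taking $\textbf{\textit{K}}=\mathbb{R}^2$ with $J=\mathrm{diag}(1,-1)$ and $M=\mathrm{span}\{(1,1)\}$ gives $Q_{JM}Q_M=0$, which is idempotent but has range $\{0\}\neq M$. So the proposition, as stated with only ``$M$ closed'', is false without an additional hypothesis; your instinct to supply regularity (equivalently projective completeness) is exactly what is needed, since a $J$-projection onto $M$ exists if and only if $M$ is regular. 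Whether the original source \cite{pahko} carries that hypothesis is something to check there, but it is not present in the paper under review.
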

Now we provide an example to show that the definition of frames in Krein spaces as defined by Esmeral \textit{et. al.} \cite{koe} is dependent on fundamental symmetry.
\begin{definition}\cite{koe}
Let $\textbf{\textit{K}}$ be a Krein space. A countable sequence $\{f_n\}_{n\in{\mathbb{N}}}$ is called a frame for $\textbf{\textit{K}}$, if there exist constants $0<A\leq{B}<\infty$ such that
\begin{equation*}
A\|f\|_J^2\leq\sum_{n\in{\mathbb{N}}}|[f_n,f]|^2~{\leq}~B\|f\|_J^2
\end{equation*}
\end{definition}
\begin{example}
 Consider the vector space $\ell^2(\mathbb{N})$ over $\mathbb{R}$. Let $x=(\alpha_1,\alpha_2,\ldots),~y=
(\beta_1,\beta_2,\ldots)~\in\ell^2$; we define $\langle{x},y\rangle=\alpha_1\beta_1-\alpha_2\beta_2+\alpha_3\beta_3-\alpha_4\beta_4\ldots$\\
Then $(\ell^2,\langle\cdot\rangle)$ is a Krein space with fundamental symmetry $J_1:\ell^2\rightarrow\ell^2$ defined by $J_1(\alpha_1,\alpha_2,\ldots)=(\alpha_1,-\alpha_2,\ldots)$, but we can define another fundamental symmetry $J_2$ on $(\ell^2,\langle\cdot\rangle)$ s.t. $J_2:\ell^2\rightarrow\ell^2$ defined by $J_2(\alpha_1,\alpha_2,\alpha_3,\alpha_4,\ldots)=
(2\alpha_1-\sqrt{3}\alpha_2,\sqrt{3}\alpha_1-2\alpha_2,\alpha_3,-\alpha_4,\ldots)$.
Now according to the definition given by K. Esmeral et. al. \cite{koe}, the sequence $\{e_n:n\in{\mathbb{N}}\}$ is a frame for the triple $(\ell^2,\langle\cdot\rangle,J_1)$ with frame bound $A=B=1$, but if we consider the triple $(\ell^2,\langle\cdot\rangle,J_2)$, then the sequence $\{e_n:n\in{\mathbb{N}}\}$ is not a frame with frame bound $A=B=1$. So the definition given in \cite{koe} takes away some important geometries of Parseval frame.

But the definition in \cite{gmmm} is independent on $J$-symmetry of the Krein space $\textbf{\textit{K}}$. Let $(\textbf{\textit{K}},[~.~],J_1)$ and $(\textbf{\textit{K}},[~.~],J_2)$ be two cannonical decompositions of $\textbf{\textit{K}}$. But from Theorem 7.19 \cite{isty} we know that the $J$-norms $\|.\|_{J_1}$ and $\|.\|_{J_2}$, are equivalent. Since $R(T_+)$ and $R(T_-)$ are uniformly $J$-definite, hence the norms generated by the inner products $[~.~]$ and $[~.~]_{J_1}$ are equivalent. Also the norms generated by the inner products $[~.~]$ and $[~.~]_{J_2}$ are equivalent. So $R(T_+)$ and $R(T_-)$ are uniformly $J$-definite in the both $J$-fundamental symmetries. Hence if $\{f_n:n\in{\mathbb{N}}\}$ is a $J$-frame for $(\textbf{\textit{K}},[~.~],J_1)$, then $\{f_n:n\in{\mathbb{N}}\}$ is also $J$-frame for $(\textbf{\textit{K}},[~.~],J_2)$. This establishes our claim. 
\end{example}
\section{Main results}
\subsection{General Properties}
Projections and Orthogonal projections play a special role in many aspects of frame theory \cite{pgcml}. We first prove the following theorem in connection with frames and orthogonal projections \cite{oc} in the context of Krein spaces.
\begin{theorem}
Let $(\textbf{\textit{K}},[~.~],J)$ be a Krein space with fundamental symmetry $J$, and let $P_M$ be a $J$-orthogonal projection from $\textbf{\textit{K}}$ onto $M$. Also let $M$ be a uniformly positive definite subspace of $\textbf{\textit{K}}$. Then if $\{f_n\}_{n\in{\mathbb{N}}}$ is a $J$-frame for $\textbf{\textit{K}}$ then $\{P_M(f_n)\}_{n\in{\mathbb{N}}}$ is also a $J$-frame for $P_M\textbf{\textit{K}}$ and $\{(I-P_M)f_n\}_{n\in{\mathbb{N}}}$ is a $J$-frame for $(I-P_M)\textbf{\textit{K}}$.
\end{theorem}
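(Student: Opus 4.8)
The plan is to reduce everything to the Hilbert space result, Theorem 1.1, after first replacing the given symmetry $J$ by one adapted to $M$. Since $M$ is uniformly positive it is regular, so $\textbf{\textit{K}}=M[\dot{+}]M^{[\perp]}$ with $M^{[\perp]}$ again a regular (hence Krein) subspace. Taking $J_0=\mathrm{id}_M\oplus J_{M^{[\perp]}}$, where $J_{M^{[\perp]}}$ is a fundamental symmetry of $M^{[\perp]}$, one checks that $J_0$ is a fundamental symmetry of $\textbf{\textit{K}}$ for which $M\subseteq\textbf{\textit{K}}^{+}$ and $\textbf{\textit{K}}^{-}\subseteq M^{[\perp]}$. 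Because $J_0$ preserves both $M$ and $M^{[\perp]}$ it commutes with $P_M$, so $P_M$ is simultaneously the $J$-orthogonal projection onto $M$ and the orthogonal projection of the associated Hilbert space $(\textbf{\textit{K}},[~,~]_{J_0})$ onto $M$. By Example $(2.10)$ the $J$-frame property is independent of the chosen symmetry, so I may work throughout with $J_0$, which I keep denoting $J$.

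With this normalization the analytic content is immediate. A $J$-frame has an onto synthesis operator (as $R(T_+)[\dot{+}]R(T_-)=\textbf{\textit{K}}$), so $\{f_n\}$ is in particular a frame of the associated Hilbert space; since $P_M$ and $I-P_M$ are now orthogonal projections, Theorem 1.1 shows that $\{P_M f_n\}$ and $\{(I-P_M)f_n\}$ are frames for the Hilbert spaces $M$ and $M^{[\perp]}$. In particular both families are Bessel and complete in the respective subspaces, and each synthesis operator $P_MT$ and $T':=(I-P_M)T$ is onto.

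For the first conclusion it then remains only to read off the geometric conditions of Definition 2.9 inside $M$. As $M$ is positive definite, every $P_M f_n$ has $[P_M f_n,P_M f_n]\ge 0$, so the negative index set is empty and $R((P_MT)_-)=\{0\}$, the unique maximal uniformly $J$-negative subspace of the definite space $M$; while $R((P_MT)_+)=\overline{\mathrm{span}}\{P_M f_n\}=M$ by completeness, which is maximal uniformly $J$-positive. Hence $\{P_M f_n\}$ is a $J$-frame for $M=P_M\textbf{\textit{K}}$.

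The second conclusion is where the real work and the main obstacle lie, because $M^{[\perp]}$ is genuinely indefinite. Writing $T'_{\pm}$ for the parts of $T'$ selected by the sign of $[(I-P_M)f_n,(I-P_M)f_n]$, I would start from the $J$-orthogonal splitting $f_n=P_Mf_n+(I-P_M)f_n$, which gives $[(I-P_M)f_n,(I-P_M)f_n]=[f_n,f_n]-[P_M f_n,P_M f_n]\le[f_n,f_n]$ and hence $I_-\subseteq I'_-$. From this one checks that $N_-:=(I-P_M)M_-$ (with $M_-=R(T_-)$) is uniformly $J$-negative and, via injectivity of $I-P_M$ on $M_-$ together with the dimension count $\dim(M^{[\perp]})^{-}=\dim\textbf{\textit{K}}^{-}=\dim M_-$, a maximal uniformly $J$-negative subspace of $M^{[\perp]}$ contained in $R(T'_-)$. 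The obstacle is the reverse inclusion: the identity above controls only one direction, and a frame vector with $[f_n,f_n]\ge 0$ but large positive part in $M$ is sent by $I-P_M$ to a $J$-negative vector of $M^{[\perp]}$, so $I'_-$ can strictly contain $I_-$ and $R(T'_-)$ can \emph{a priori} overflow the maximal subspace $N_-$, destroying uniform negativity; the same sign-crossing obstructs showing $R(T'_+)$ is uniformly positive. I expect this to be the hard step, and to require more than bare uniform positivity of $M$ — for instance that $M$ be maximal uniformly positive, equivalently $M^{[\perp]}$ uniformly negative, in which case $I'_+$ forces $(I-P_M)f_n=0$, so $R(T'_+)=\{0\}$ and $R(T'_-)=M^{[\perp]}$ and the $J$-frame conditions hold automatically. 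Once the overflow is ruled out, $R(T'_-)=N_-$ is maximal uniformly negative and $R(T'_+)$ is maximal uniformly positive, completing the proof.
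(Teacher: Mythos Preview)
Your argument for the first conclusion is essentially the paper's. Rather than changing the symmetry, the paper keeps $J$ and uses Proposition~2.9 to relate $Q_M$ and $P_M$: for $f\in M$ it computes $[f,Q_Mf_i]_J=[P_Mf,P_Mf_i]_J$, applies the Hilbert-space projection result (Theorem~1.1) to $\{Q_Mf_n\}$, and then transfers the frame inequalities to $[\,.\,,\,.\,]$ via the equivalence of norms on the uniformly definite subspace $M$ (Proposition~3.3 of \cite{gmmm}). Your device of passing to a symmetry $J_0$ that commutes with $P_M$ accomplishes the same reduction and is a legitimate variant.

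For the second conclusion you have located the actual crux, and the paper resolves it in exactly the way you anticipated rather than by controlling the sign-crossing. Early in the proof it asserts, citing Theorem~7.1 of \cite{jb}, that the uniformly $J$-positive regular subspace $M$ is in fact \emph{maximal} uniformly $J$-positive; hence $M^{[\perp]}$ is maximal uniformly $J$-negative, and the $(I-P_M)$ part is dispatched by the symmetric argument ``using the self-adjoint involution $-J$''. That maximality step is the only idea your write-up lacks. Your hesitation is well placed, though: a regular uniformly positive subspace need not be maximal (take $M=\mathrm{span}\{e_1\}$ in $\mathbb{R}^3$ with signature $(+,+,-)$), and in that situation your overflow phenomenon really occurs---for the $J$-frame $f_1=(3,1,2)$, $f_2=e_2$, $f_3=e_3$ one finds $(I-P_M)f_1=(0,1,2)$ is $J$-negative, so $I'_-=\{1,3\}$ and $R(T'_-)=M^{[\perp]}$, which is not uniformly negative. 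Thus the paper's proof, and the statement as written, implicitly rely on $M$ being maximal; your analysis pinpoints exactly why, and the paper's only additional input is the appeal to \cite{jb} for that maximality.
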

\begin{proof}
Since $P_M$ is a $J$-orthogonal projection with $R(P_M)=M$, therefore it is necessarily a $J$-orthogonal projection onto $M$ parallel to $M^{[\perp]}$. Hence $(I-P_M)$ is a $J$-orthogonal projection onto $M^{[\perp]}$ parallel to $M$. As $M$ is a image of a $J$-orthogonal projection hence $M$ is a regular subspace of $\textbf{\textit{K}}$ i.e. equivalently $M$ is a projectively complete subspace of $\textbf{\textit{K}}$. So $\textbf{\textit{K}}=M[\dot{+}]M^{[\perp]}$. So we have a fundamental decomposition of $\textbf{\textit{K}}$. Since $M$ is uniformly positive therefore by Theorem 7.1 \cite{jb} we claim that $M$ is a maximal uniformly definite regular subspace of $\textbf{\textit{K}}$.  Also we have $P_M(x)=Q_M(x)=x\textmd{ for all }x\in{M}$. Now $\{f_n\}_{n\in{\mathbb{N}}}$ is a frame for the associated Hilbert space $(\textbf{\textit{K}},[~.~]_J)$, so according to Hilbert space frame theory $\{Q_M(f_n)\}_{n\in{\mathbb{N}}}$ is a frame for $(M,[~.~]_J)$. Now for every $f\in{M}$,
\begin{equation*}
\begin{split}
		[f,Q_M(f_i)]_J &=[f,JP_MJP_M(f_i)]_J \\
		& =[f,P_MJP_M(f_i)]\\
		& =[P_M(f),JP_M(f_i)]\\
		& =[P_Mf,P_Mf_i]_J.
\end{split}
\end{equation*}
According to our assumption $M$ is uniformly definite, hence the norms generated by the inner products $[~.~]$ and $[~.~]_J$ are equivalent. Also $\{P_M(f_n)\}_{n\in{\mathbb{N}}}$ is a Bessel sequence in $\textbf{\textit{K}}$. Hence using Proposition 3.3 \cite{gmmm} we have $0<A\leq{B}$ such that
\begin{equation*}
A[f,f]{\leq}\sum_{n\in{\mathbb{N}}}|[f,P_Mf_i]|^2{\leq}B[f,f]~\textmd{for every}~f\in{M}
\end{equation*}
Hence $\{f_n\}_{n\in{\mathbb{N}}}$ is a frame for $(M,[~.~])$. We condiser the subspace $\{0\}$ as maximal uniformly $J$-negative subspace in $M$, then $\{f_n\}_{n\in{\mathbb{N}}}$ is a $J$-frame for $M$, consisting only positive elements.\\
Since $M$ is maximal uniformly $J$-positive subspace in $\textbf{\textit{K}}$, so $M^{[\perp]}$ is a maximal uniformly $J$-negative subspace.\\
Using the self-adjoint involution $-J$ and the similar arguments as above we can show that $\{(I-P_M)f_n\}_{n\in{\mathbb{N}}}$ is a $J$-frame for $(I-P_M)\textbf{\textit{K}}$.
\end{proof}
We next state the theorem, the proof of which follows in the same way as above.
\begin{theorem}
Let $(\textbf{\textit{K}},[~.~],J)$ be a Krein space with fundamental symmetry $J$, and let $P_M$ be an $J$-orthogonal projection from $\textbf{\textit{K}}$ onto $M$. Also let $M$ is a uniformly negative definite subspace of $\textbf{\textit{K}}$. Then if $\{f_n\}_{n\in{\mathbb{N}}}$ is a $J$-frame for $\textbf{\textit{K}}$ then $\{P_M(f_n)\}_{n\in{\mathbb{N}}}$ is also a $J$-frame for $P_M\textbf{\textit{K}}$ and $\{(I-P_M)f_n\}_{n\in{\mathbb{N}}}$ is a $J$-frame for $(I-P_M)\textbf{\textit{K}}$.
\end{theorem}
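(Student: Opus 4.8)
The plan is to reduce Theorem 3.2 to the already-established Theorem 3.1 by exploiting the symmetry between $J$-positivity and $J$-negativity that is built into Definition 2.2. Concretely, I would pass from the triple $(\textbf{\textit{K}},[\cdot,\cdot],J)$ to the triple $(\textbf{\textit{K}},-[\cdot,\cdot],-J)$. This is again a Krein space: its fundamental symmetry is $\tilde J=-J$, and a short computation shows that its associated Hilbert space inner product $-[x,\tilde J y]=-[x,-Jy]=[x,Jy]=[x,y]_J$ coincides with the original $J$-inner product, so the $J$-norm, the Bessel property, and all topological notions are literally unchanged. Under this flip the subspace $M$, which is uniformly negative for $(\textbf{\textit{K}},[\cdot,\cdot],J)$, becomes uniformly positive for $(\textbf{\textit{K}},-[\cdot,\cdot],-J)$, precisely by the way $J$-negativity is defined in Definition 2.2.

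Next I would check that the remaining hypotheses of Theorem 3.1 are inherited. Since $[x,y]=0$ if and only if $-[x,y]=0$, the $J$-orthogonal complement $M^{[\perp]}$ is the same set in both triples; and since the $[\ast]$-adjoint condition $[P_M x,y]=[x,P_M y]$ is unaffected by an overall sign, $P_M$ is still a $J$-orthogonal projection onto $M$ parallel to $M^{[\perp]}$ in the flipped space. Thus $M$ remains regular and projectively complete, and $\textbf{\textit{K}}=M[\dot{+}]M^{[\perp]}$ is still a fundamental decomposition. Granting that $\{f_n\}_{n\in\mathbb{N}}$ is still a $J$-frame after the flip, Theorem 3.1 applied to $(\textbf{\textit{K}},-[\cdot,\cdot],-J)$ immediately yields that $\{P_M(f_n)\}_{n\in\mathbb{N}}$ is a $J$-frame for $P_M\textbf{\textit{K}}$ and $\{(I-P_M)f_n\}_{n\in\mathbb{N}}$ is a $J$-frame for $(I-P_M)\textbf{\textit{K}}$; since being a $J$-frame is insensitive to the simultaneous sign flip, these conclusions transfer back to $(\textbf{\textit{K}},[\cdot,\cdot],J)$.

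The step I expect to be the main obstacle is justifying that the $J$-frame property survives the flip $[\cdot,\cdot]\mapsto-[\cdot,\cdot]$. In the $J$-frame definition following \cite{gmmm} the index sets are $I_+=\{i:[f_i,f_i]\ge 0\}$ and $I_-=\{i:[f_i,f_i]<0\}$, so replacing $[\cdot,\cdot]$ by $-[\cdot,\cdot]$ interchanges the roles of $R(T_+)$ and $R(T_-)$ and swaps maximal uniformly $J$-positive with maximal uniformly $J$-negative; one must take a little care with the asymmetry coming from the strict versus non-strict inequalities, i.e. with generators satisfying $[f_i,f_i]=0$, to verify that the ranges $R(T_\pm)$ are matched correctly. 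Once this bookkeeping is in place the $J$-frame axioms for the two triples are equivalent.

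As an alternative that avoids this bookkeeping entirely, one may simply repeat the proof of Theorem 3.1 line by line with the self-adjoint involution $-J$ in place of $J$ and $-[\cdot,\cdot]$ in place of $[\cdot,\cdot]$: Theorem 7.1 of \cite{jb} now gives that $M$ is a maximal uniformly $J$-negative regular subspace, the identity $[f,Q_M(f_i)]_J=[P_Mf,P_Mf_i]_J$ is unchanged, the equivalence of the norms induced by $-[\cdot,\cdot]$ and $[\cdot,\cdot]_J$ together with Proposition 3.3 of \cite{gmmm} gives $A(-[f,f])\le\sum_{n\in\mathbb{N}}|[f,P_Mf_i]|^2\le B(-[f,f])$ for $f\in M$, and taking $\{0\}$ as the maximal uniformly $J$-positive part exhibits $\{P_M(f_n)\}_{n\in\mathbb{N}}$ as a $J$-frame for $M$ consisting only of negative elements; the statement for $(I-P_M)\textbf{\textit{K}}$ then follows by the dual argument.
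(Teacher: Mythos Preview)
Your proposal is correct and matches the paper's own treatment: the paper does not give a separate proof of Theorem~3.2 but simply states that it ``follows in the same way as above,'' i.e., by rerunning the argument of Theorem~3.1 with the self-adjoint involution $-J$ in place of $J$, which is precisely your alternative approach (and the content of your sign-flip reduction). Your extra care about the $I_+/I_-$ bookkeeping under the flip is a reasonable point, but the paper does not address it and treats the $-J$ symmetry as evident.
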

Since $M(=P\textbf{\textit{K}})$ is a vector space with an indefinite inner product $[~.~]$, $M$ equipped with $[~.~]$ cannot be neutral or semi-definite since $M^{0}=M{\cap}M^{[\perp]}=0$. We have $\textbf{\textit{K}}=M[\dot{+}]M^{[\perp]}$. So according to Theorem 7.16 \cite{isty}, $(M,[~.~])$ is a Krein space. Let us assume that $M$ is a $J$-definite subspace of $\textbf{\textit{K}}$. But any $J$-definite subspace of $\textbf{\textit{K}}$ is regular iff it is uniformly $J$-definite [\cite{jb}, Theorem 8.2]. Therefore $M$ is a uniformly $J$-definite subspace of $\textbf{\textit{K}}$. Without any loss of generality let us assume that $M$ is a uniformly $J$-positive definite subspace of $\textbf{\textit{K}}$. Then $M^{[\perp]}$ is a uniformly $J$-negative definite subspace of $\textbf{\textit{K}}$. Since $\textbf{\textit{K}}$ is ortho-complemented, therefore according to Lemma 11.4 \cite{jb}, $M$ is maximal $J$-positive definite subspace of $\textbf{\textit{K}}$. So $M$ is maximal uniformly $J$-positive definite subspace of $\textbf{\textit{K}}$ and similarly $M^{[\perp]}$ is maximal uniformly $J$-negative definite subspace of $\textbf{\textit{K}}$. Hence the range of a $J$-orthogonal projection is either maximal uniformly $J$-definite subspace of $\textbf{\textit{K}}$ or $M$ is a Krein space with respect to the induced inner product of $\textbf{\textit{K}}$.

But in general if the subspace contain any non-zero neutral elements then $\{P_M(f_n)\}_{n\in{\mathbb{N}}}$ may not be a $J$-frame for $P_M\textbf{\textit{K}}$. This can be checked by the following example.
\begin{example}
Consider the Vector space $\mathbb{R}^3$ over $\mathbb{R}$. Let $\{e_1,e_2,e_3\}$ be the standard orthonormal basis. Define $[e_i,e_i]=1~~~\textmd{for}~{i=1,3}$ and $[e_2,e_2]=-1$. Also $[e_i,e_j]=0$ for $i\neq{j}$. Then $\mathbb{R}^3$ is a Krein space with respect to the above inner product. Let $P_M$ is a $J$-orthogonal projection from $\mathbb{R}^3$ onto $M$ defined by $P_M(x,y,z)=(x,y,0)~\textmd{for all}~(x,y,z)\in\mathbb{R}^3$. Now consider the following collection of vectors $\{(1,1,-1001),(10,-\frac{1}{200},-5),(0,1,0)\}$. It is a $J$-frame for the given Krein space. But if we apply the $J$-orthogonal projection $P_M$, then the resulting collection is not a $J$-frame in $(M,[~.~])$. Since it contains non-zero neutral elements.
\end{example}

\begin{remark}
 Let $(\textbf{\textit{K}},[~.~],J)$ be a Krein space with fundamental symmetry $J$, and let $P_M$ be a $J$-orthogonal projection from $\textbf{\textit{K}}$ onto $M$. Also let $M$ be a uniformly positive definite subspace of $\textbf{\textit{K}}$. Then if $\{f_n\}_{n\in{\mathbb{N}}}$ is a $J$-frame for $\textbf{\textit{K}}$ with frame bounds $B_2\leq{A_2}<0<A_1\leq{B_1}$ then $\{P_M(f_n)\}_{n\in{\mathbb{N}}}$ is also a $J$-frame for $P_M\textbf{\textit{K}}$ and $\{(I-P_M)f_n\}_{n\in{\mathbb{N}}}$ is a $J$-frame for $(I-P_M)\textbf{\textit{K}}$. But generally we can not write their frame bound in terms of original frame bounds.
\end{remark}
\begin{remark} 
 Let $P_M$ is a $J$-orthogonal projection from a Krein space $(\textbf{\textit{K}},[~.~],J)$ onto a closed subspace $M$. Also let $P_MJ=JP_M$. Then 
\begin{equation*}
\begin{split}
		[P_M(f_n),f_n] &=[P_M(f_n),P_M(f_n)]\\
		& =[P_M(f_n),JP_M(f_n)]_J\\
		& =[P_MJP_M(f_n),f_n]_J\\
		& =[Q_M(f_n),f_n]_J\\
		& =[Q_M(f_n),Q_M(f_n)]_J\\
		& =\|Q_M(f_n)\|^2_J.
\end{split}
\end{equation*}
So we have $[P_M(f_n),f_n]>0$ for all $n\in{\mathbb{N}}$. Also $P_M(f_n)=Q_M(f_n)\neq{0}$ for all $n\in{\mathbb{N}}$. 
\end{remark}
We next prove the following theorem
\begin{theorem}
If $\{f_n\}_{n\in{\mathbb{N}}}$ is a $J$-frame for $P\textbf{\textit{K}}$ and $\{g_n\}_{n\in{\mathbb{N}}}$ is a $J$-frame for $(I-P)\textbf{\textit{K}}$, both with $J$-frame bounds $B_2\leq{A_2}<0<A_1\leq{B_1}$, then $\{f_n\}_{n\in{\mathbb{N}}}\bigcup{\{g_n\}_{n\in{\mathbb{N}}}}$ is a $J$-frame for $\textbf{\textit{K}}$ admitting the same $J$-frame bounds.
\end{theorem}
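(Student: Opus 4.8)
The plan is to imitate the converse part of the Hilbert space Theorem~1.1, replacing the orthogonal splitting of $\textbf{\emph{H}}$ by the $J$-orthogonal splitting of $\textbf{\textit{K}}$ and carrying every estimate in the indefinite product $[~.~]$ instead of $[~.~]_J$. Since $P$ is a $J$-projection, $P\textbf{\textit{K}}$ is a regular (hence Krein) subspace, $I-P$ is the $J$-projection onto $(P\textbf{\textit{K}})^{[\perp]}=(I-P)\textbf{\textit{K}}$, and $\textbf{\textit{K}}=P\textbf{\textit{K}}[\dot{+}](I-P)\textbf{\textit{K}}$ is a $J$-orthogonal decomposition. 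I would first record two identities that drive the argument. Because $f_n\in R(P)$ and $P=P^{[\ast]}$, for every $f\in\textbf{\textit{K}}$ we have $[f,f_n]=[f,Pf_n]=[Pf,f_n]$, and symmetrically $[f,g_n]=[(I-P)f,g_n]$ since $g_n\in R(I-P)$; moreover $J$-orthogonality of the decomposition gives the Pythagoras identity $[f,f]=[Pf,Pf]+[(I-P)f,(I-P)f]$.

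Next, write $\{h_i\}_{i\in I}$ for the union $\{f_n\}\cup\{g_n\}$ and split $I$ by the sign of $[h_i,h_i]$; denote by $I_{\pm}^{f},I_{\pm}^{g}$ the index sets of the $J$-positive and $J$-negative members of the two given frames. As this sign is intrinsic, the $J$-positive part of $\{h_i\}$ is the union of those of $\{f_n\}$ and $\{g_n\}$, so its closed span is $M_+:=M_+^{f}[\dot{+}]M_+^{g}$, where $M_{\pm}^{f}\subseteq P\textbf{\textit{K}}$ and $M_{\pm}^{g}\subseteq(I-P)\textbf{\textit{K}}$ are the corresponding spans; the sum is $J$-orthogonal because its summands lie in the $J$-orthogonal pieces $P\textbf{\textit{K}}$ and $(I-P)\textbf{\textit{K}}$. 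Likewise $M_-:=M_-^{f}[\dot{+}]M_-^{g}$. The frame inequalities then follow exactly as in the Hilbert case: for $f\in M_+$ one has $Pf\in M_+^{f}$ and $(I-P)f\in M_+^{g}$, so by the first identity
\begin{equation*}
\sum_{i\in I_{+}}|[f,h_i]|^{2}=\sum_{i\in I_{+}^{f}}|[Pf,f_i]|^{2}+\sum_{j\in I_{+}^{g}}|[(I-P)f,g_j]|^{2}.
\end{equation*}
Applying the common positive bounds $A_1,B_1$ of the two frames to the two sums and adding, the Pythagoras identity gives $A_1[f,f]\le\sum_{i\in I_{+}}|[f,h_i]|^{2}\le B_1[f,f]$. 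The $J$-negative estimate on $M_-$ is identical with $A_2,B_2$, the sign bookkeeping being consistent since there $[f,f]<0$ and $A_2,B_2<0$. Thus the four bounds are literally inherited, and these are the defining inequalities of a $J$-frame (Proposition~3.3 \cite{gmmm}); the union is Bessel, being a union of two Bessel sequences.

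The step I expect to demand the most care is showing that $M_+$ and $M_-$ are \emph{maximal} uniformly $J$-positive, respectively $J$-negative, as the definition of a $J$-frame \cite{gmmm} requires. Uniform definiteness of $M_+=M_+^{f}[\dot{+}]M_+^{g}$ follows from that of its two $J$-orthogonal summands together with boundedness of $P,I-P$ in the $J$-norm: a triangle inequality yields $[x,x]\ge\tfrac12\min(\epsilon_f,\epsilon_g)\|x\|_J^2$ on $M_+$. For maximality I would argue structurally rather than through an inequality. Since $\{f_n\}$ is a $J$-frame for the Krein space $P\textbf{\textit{K}}$, its synthesis operator is onto $P\textbf{\textit{K}}$, so $P\textbf{\textit{K}}=M_+^{f}\dotplus M_-^{f}$, a direct (not $J$-orthogonal) sum, the intersection being trivial as the summands are uniformly definite of opposite signs; likewise $(I-P)\textbf{\textit{K}}=M_+^{g}\dotplus M_-^{g}$. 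Regrouping inside $\textbf{\textit{K}}=P\textbf{\textit{K}}[\dot{+}](I-P)\textbf{\textit{K}}$ then gives the direct decomposition $\textbf{\textit{K}}=M_+\dotplus M_-$ with $M_+$ uniformly $J$-positive and $M_-$ uniformly $J$-negative. A direct-sum decomposition of $\textbf{\textit{K}}$ into a uniformly positive and a uniformly negative subspace forces, by the index/structure theory of Krein spaces [\cite{jb}, Theorem~7.1], each summand to be maximal of its kind; hence $M_+$ is maximal uniformly $J$-positive and $M_-$ maximal uniformly $J$-negative.

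With the Bessel property, the inherited bounds, and the maximality of $M_{\pm}$ all established, the defining conditions of a $J$-frame \cite{gmmm} are met and $\{f_n\}\cup\{g_n\}$ is a $J$-frame for $\textbf{\textit{K}}$ with the same bounds $B_2\le A_2<0<A_1\le B_1$. Two technical points deserve attention: the closedness of the non-$J$-orthogonal sums (which holds because uniformly definite subspaces of opposite signs meet at a positive angle and the summands sit inside complementary regular subspaces), and the directness of the regrouped sum $M_+\dotplus M_-$, verified by applying $P$ and $I-P$ to an element of $M_+\cap M_-$ and invoking $M_+^{f}\cap M_-^{f}=\{0\}$ and $M_+^{g}\cap M_-^{g}=\{0\}$.
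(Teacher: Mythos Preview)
Your proposal is correct and follows essentially the same route as the paper: split $\textbf{\textit{K}}=P\textbf{\textit{K}}[\dot{+}](I-P)\textbf{\textit{K}}$, form $M_{\pm}$ as the $J$-orthogonal sum of the corresponding pieces from the two frames, and derive the bounds by decomposing $f\in M_{\pm}$ into its $P$ and $I-P$ parts together with the Pythagoras identity for $[~,~]$. Your argument is in fact more complete than the paper's, which simply asserts ``It is also a maximal uniformly $J$-positive subspace'' without justification; your regrouping $\textbf{\textit{K}}=M_+\dotplus M_-$ and appeal to the structure theory of \cite{jb}, along with the explicit Bessel and closedness remarks, fill gaps the paper leaves implicit.
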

\begin{proof}
Let $P(\textbf{\textit{K}})=M$, then $M$ is a regular subspace of $\textbf{\textit{K}}$. Let $I_+^1=\{i\in{\mathbb{N}}:[f_i,f_i]>0\}$ and $I_-^1=\{i\in{\mathbb{N}}:[f_i,f_i]<0\}$. Also let $M_+^1=\overline{span}\{f_i:i\in{I_{+}^{1}}\}$ and $M_-^1=\overline{span}\{f_i:i\in{I_-^1}\}$. Since $\{f_n\}_{n\in{\mathbb{N}}}$ is a $J$-frame for $M$, so the subspaces $M_+^1$ and $M_-^1$ are maximal uniformly $J$-definite and $M=M_+^1\bigoplus{M_-^1}$.

 Similarly let $I_+^2=\{i\in{\mathbb{N}}:[g_i,g_i]>0\}$ and $I_-^2=\{i\in{\mathbb{N}}:[g_i,g_i]<0\}$. Then we have We assume $M^{[\perp]}=M_+^2\bigoplus{M_-^2}$, where $M_+^2=\overline{span}\{g_i:i\in{I_{+}^{2}}\}$ and $M_-^2=\overline{span}\{g_i:i\in{I_-^2}\}$. According to our assumption $M$ is a regular subspace of $\textbf{\textit{K}}$, so $\textbf{\textit{K}}=M\bigoplus{M^{[\perp]}}$. Now $\{f_n\}_{n\in{I_+^1}}\bigcup\{g_n\}_{n\in{I_+^2}}$ is the set of all positive vectors in $\textbf{\textit{K}}$. We want to show that their closed linear sum is a maximal uniformly $J$-positive subspace of $\textbf{\textit{K}}$. Now $M_+^1{+}M_+^2$ is a uniformly $J$-positive subspace of $\textbf{\textit{K}}$ since $M_+^1{[\perp]}M_+^2$. It is also a maximal uniformly $J$-positive subspace. Similarly we can show that $\overline{span}\Big\{\{f_n\}_{n\in{I_-^1}}\bigcup\{g_n\}_{n\in{I_-^2}}\Big\}=M_-^1{+}M_-^2$ and $M_-^1{+}M_-^2$ is a maximal uniformly $J$-negative subspace. So we have $\textbf{\textit{K}}=(M_+^1{+}M_+^2){+}(M_-^1{+}M_-^2)$. Now we have to show that $\{f_n\}_{n\in{\mathbb{N}}}\bigcup{\{g_n\}_{n\in{\mathbb{N}}}}$ is a $J$-frame for $\textbf{\textit{K}}$ admitting the same $J$-frame bounds, but this is easy to prove. Let $f\in{M_+^1{+}M_+^2}$, then $f=f_1+f_2$, where $f_1\in{M_+^1}$ and $f_2\in{M_+^2}$. Now $[f,f]=[f_1,f_1]+[f_2,f_2]$ and $|[f,f_i]|^2=|[f_1,f_i]|^2$ for all $i{\in}I_+^1$. Also we have $|[f,g_i]|^2=|[f_2,g_i]|^2$ for all $i{\in}I_+^2$. So for all $f\in{M_+^1{+}M_+^2}$, we have 
\begin{equation*}
A_1[f,f]\leq\sum_{i{\in}I_+^1}|[f,f_i]|^2{+}\sum_{i{\in}I_+^2}|[f,g_i]|^2\leq{B_1[f,f]}
\end{equation*}
Similarly we can show that for all $g\in{M_-^1{+}M_-^2}$
\begin{equation*}
A_2[f,f]\leq\sum_{i{\in}I_-^1}|[f,f_i]|^2{+}\sum_{i{\in}I_-^2}|[f,g_i]|^2\leq{B_2[f,f]}
\end{equation*}
This is our proof.
\end{proof}
Combining Theorems 3.1, 3.2 and 3.6 we get the following theorem
\begin{theorem}
Let $(\textbf{\textit{K}},[~.~],J)$ be a Krein space with fundamental symmetry $J$, and let $P$ be an $J$-orthogonal projection on $\textbf{\textit{K}}$.\\
If $\{f_n\}_{n\in{\mathbb{N}}}$ is a $J$-frame for $\textbf{\textit{K}}$, then $\{Pf_n\}_{n\in{\mathbb{N}}}$ is also a $J$-frame for $P\textbf{\textit{K}}$ and $\{(I-P)f_n\}_{n\in{\mathbb{N}}}$ is a $J$-frame for $(I-P)\textbf{\textit{K}}$ provided $P\textbf{\textit{K}}$ is $J$-definite.

Conversely, if $\{f_n\}_{n\in{\mathbb{N}}}$ is a $J$-frame for $P\textbf{\textit{K}}$ and $\{g_n\}_{n\in{\mathbb{N}}}$ is a $J$-frame for $(I-P)\textbf{\textit{K}}$, both with $J$-frame bounds $B_2\leq{A_2}<0<A_1\leq{B_1}$, then $\{f_n\}_{n\in{\mathbb{N}}}\bigcup{\{g_n\}_{n\in{\mathbb{N}}}}$ is a $J$-frame for $\textbf{\textit{K}}$ admitting the same $J$-frame bounds.
\end{theorem}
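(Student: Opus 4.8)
The plan is to recognise that this statement is an assembly of the three results just established, so the proof reduces to matching hypotheses rather than developing new machinery. For the forward implication I would write $M = P\textbf{\textit{K}}$ and first record the structural facts about $M$. Since $P$ is a $J$-orthogonal projection, $M = R(P)$ is regular, hence projectively complete, and $\textbf{\textit{K}} = M[\dot{+}]M^{[\perp]}$ with $(I-P)$ the $J$-orthogonal projection onto $M^{[\perp]}$ parallel to $M$.

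The one genuine gap to close is that the hypothesis only supplies that $M$ is $J$-definite, whereas Theorems 3.1 and 3.2 demand \emph{uniform} $J$-definiteness. I would bridge this exactly as in the paragraph following Theorem 3.2: by [\cite{jb}, Theorem 8.2] a $J$-definite subspace is regular if and only if it is uniformly $J$-definite, and $M$ is already known to be regular, so $M$ is uniformly $J$-definite. Now split into two cases. If $M$ is uniformly $J$-positive, Theorem 3.1 applies directly and gives that $\{Pf_n\}_{n\in\mathbb{N}}$ is a $J$-frame for $P\textbf{\textit{K}}$ and $\{(I-P)f_n\}_{n\in\mathbb{N}}$ is a $J$-frame for $(I-P)\textbf{\textit{K}}$. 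If instead $M$ is uniformly $J$-negative, the same conclusion follows from Theorem 3.2. These two cases exhaust the possibilities for a uniformly $J$-definite subspace, so the forward direction is complete.

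For the converse, the hypotheses are word-for-word those of Theorem 3.6: $\{f_n\}_{n\in\mathbb{N}}$ is a $J$-frame for $P\textbf{\textit{K}}$, $\{g_n\}_{n\in\mathbb{N}}$ is a $J$-frame for $(I-P)\textbf{\textit{K}}$, and both carry $J$-frame bounds $B_2\leq A_2<0<A_1\leq B_1$. I would therefore simply invoke Theorem 3.6 to conclude that $\{f_n\}_{n\in\mathbb{N}}\cup\{g_n\}_{n\in\mathbb{N}}$ is a $J$-frame for $\textbf{\textit{K}}$ admitting the same $J$-frame bounds.

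The proof presents no serious obstacle; the only point requiring care is the case split in the forward direction together with the implicit upgrade of ``$J$-definite'' to ``uniformly $J$-definite'' via regularity, which is precisely what licenses the appeal to Theorems 3.1 and 3.2. Everything else is a direct citation of the preceding results.
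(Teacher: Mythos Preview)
Your proposal is correct and matches the paper's approach exactly: the paper simply states that Theorem 3.7 is obtained by ``Combining Theorems 3.1, 3.2 and 3.6'' without further argument. Your write-up is in fact more explicit than the paper's, since you spell out the upgrade from $J$-definite to uniformly $J$-definite via regularity (which the paper handles in the discussion paragraph following Theorem 3.2) before invoking the two forward theorems.
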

\subsection{Frame sequences and their properties}
Frame sequence in a Hilbert space $\textbf{\emph{H}}$ is a well known concept. Kaushik \textit{et. al.} \cite{sgv} published a paper in the year 2008 to show some important properties of frame sequences. We will show that the results they found also holds for frames in Krein Spaces.\\
Now let $\textbf{\textit{K}}$ be a Krein space and $\{f_n:n\in{\mathbb{N}}\}$ be a Bessel sequence of no non-zeo neutral elements in $\textbf{\textit{K}}$. Also let $I_+=\{i\in{I}:[f_n,f_n]>{0}\}$ and $I_-=\{i\in{I}:[f_n,f_n]<0\}$.
\begin{definition}
A sequence $\{f_n:n\in{\mathbb{N}}\}$ in $\textbf{\textit{K}}$ is said to be a frame sequence in $\textbf{\textit{K}}$ if $\overline{span}\{f_n:n\in{I_+}\}$ and $\overline{span}\{f_n:n\in{I_-}\}$ are uniformly $J$-positive and uniformly $J$-negative subspace of $\textbf{\textit{K}}$respectively.
\end{definition}
\begin{definition}
A $J$-frame $\{f_n\}\textmd{ in }{\textbf{\textit{K}}}$ is called exact if removal of an arbitrary $f_n$ renders the collection $\{f_n\}$ no longer a $J$-frame for  $\textbf{\textit{K}}$.
\end{definition}
\begin{definition}
A frame $\{f_n\}\textmd{ in }{\textbf{\textit{K}}}$ is called near exact if it can be made exact by removing finitely many elements from it.\\
Also a near exact frame is called proper if it is not exact.
\end{definition}
We next prove the following theorems for frame sequences in Krein space in analogy with frame sequences in Hilbert space theory.
\begin{theorem}
Let $\{f_n\}$ be any frame of $\textbf{\textit{K}}$ and let $\{m_k\}$ and $\{n_k\}$ be two infinite increasing sequence of $\mathbb{N}$
 with $\{m_k\}\cup\{n_k\}=\mathbb{N}$. Also let $\{f_{m_{k}}\}$ be frame for $\textbf{\textit{K}}$.
Then $\{f_{n_{k}}\}$ is a frame for $\textbf{\textit{K}}$ if and only if \\
$\overline{span}\{f_{n_{k}}:[f_{n_{k}},f_{n_{k}}]>0\}=\overline{span}\{f_{m_{k}}:[f_{m_{k}},f_{m_{k}}]>0\}$\\
 and $\overline{span}\{f_{n_{k}}:[f_{n_{k}},f_{n_{k}}]<0\}=\overline{span}\{f_{m_{k}}:[f_{m_{k}},f_{m_{k}}]<0\}$.
\end{theorem}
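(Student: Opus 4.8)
The plan is to reduce the whole statement to a single structural feature of the $J$-frame definition: for any $J$-frame the closed span of its positive-type vectors coincides with $R(T_+)$, hence is a \emph{maximal} uniformly $J$-positive subspace, and dually the closed span of its negative-type vectors is a maximal uniformly $J$-negative subspace. Write $M_+=\overline{span}\{f_i:[f_i,f_i]>0\}$ and $M_-=\overline{span}\{f_i:[f_i,f_i]<0\}$ for the ambient frame $\{f_n\}$; since $\{f_n\}$ is a $J$-frame, $M_+$ is maximal uniformly $J$-positive and $M_-$ is maximal uniformly $J$-negative by Definition 2.9. For the two subsequences I would abbreviate $A_+^{(m)}=\overline{span}\{f_{m_k}:[f_{m_k},f_{m_k}]>0\}$ and $A_-^{(m)}=\overline{span}\{f_{m_k}:[f_{m_k},f_{m_k}]<0\}$, and similarly $A_+^{(n)},A_-^{(n)}$ for $\{f_{n_k}\}$; these are exactly the four subspaces appearing in the statement.

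First I would record the trivial inclusions. A vector $f_{m_k}$ with $[f_{m_k},f_{m_k}]>0$ is a positive-type vector of the full frame (same vector, same self inner product), so $A_+^{(m)}\subseteq M_+$, and the identical reasoning gives $A_+^{(n)}\subseteq M_+$, $A_-^{(m)}\subseteq M_-$, $A_-^{(n)}\subseteq M_-$; here the hypothesis $\{m_k\}\cup\{n_k\}=\mathbb{N}$ guarantees that each subsequence is drawn from the ambient frame so that its type-classification is inherited. Next I would invoke the governing maximality principle: a uniformly $J$-positive subspace that is maximal cannot be a proper subspace of any uniformly $J$-positive subspace, and dually for the negative case. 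Since $\{f_{m_k}\}$ is a $J$-frame, $A_+^{(m)}$ is maximal uniformly $J$-positive and $A_-^{(m)}$ is maximal uniformly $J$-negative; combining this with the inclusions $A_+^{(m)}\subseteq M_+$ and $A_-^{(m)}\subseteq M_-$ and the maximality principle forces the equalities $A_+^{(m)}=M_+$ and $A_-^{(m)}=M_-$.

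For the forward implication I would assume $\{f_{n_k}\}$ is a $J$-frame and repeat the previous step verbatim for the $n$-subsequence, obtaining $A_+^{(n)}=M_+$ and $A_-^{(n)}=M_-$; chaining these with the $m$-identities yields $A_+^{(n)}=M_+=A_+^{(m)}$ and $A_-^{(n)}=M_-=A_-^{(m)}$, which are precisely the two span identities claimed. For the converse I would assume the two span identities; then $A_+^{(n)}=A_+^{(m)}$ is maximal uniformly $J$-positive and $A_-^{(n)}=A_-^{(m)}$ is maximal uniformly $J$-negative. Finally, since $\{f_{n_k}\}$ is a subsequence of the Bessel sequence $\{f_n\}$ it inherits the same Bessel bound and is itself Bessel, so Definition 2.9 applies and $\{f_{n_k}\}$ is a $J$-frame for $\textbf{\textit{K}}$.

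The genuine content, and the step I expect to be the main obstacle, is the justification that the spans in the statement are the correct objects, namely that for a $J$-frame $R(T_+)=\overline{span}\{f_i:i\in I_+\}$ is closed and maximal uniformly $J$-positive (and dually for $R(T_-)$); this is what lets me treat $M_\pm$ and the $A_\pm^{(\cdot)}$ as maximal uniformly $J$-definite subspaces rather than as merely dense ranges, and it is exactly the point at which the uniform $J$-definiteness built into the $J$-frame definition is used. Once this identification and the maximality principle are in hand, everything else is routine bookkeeping.
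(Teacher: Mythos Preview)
Your proposal is correct and takes essentially the same approach as the paper: both arguments set up $M_\pm$ from the ambient $J$-frame, record the inclusions $A_\pm^{(m)},A_\pm^{(n)}\subseteq M_\pm$, use maximal uniform $J$-definiteness (from Definition~2.9) to force $A_\pm^{(m)}=M_\pm$, and then handle the two implications by the same maximality mechanism. Your version is if anything slightly more explicit than the paper's---you spell out the Bessel inheritance for the converse and articulate the maximality principle, whereas the paper simply asserts ``we obviously have'' and ``Then $\{f_{n_k}\}$ is a $J$-frame''.
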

\begin{proof}
Since $\{f_n:n\in\mathbb{N}\}$ is a $J$-frame for $\textbf{\textit{K}}$, so let $I_+=\{n\in\mathbb{N}:[f_n,f_n]>{0}\}$ and
$I_-=\{n\in\mathbb{N}:[f_n,f_n]<0\}$. Let $M_+=\overline{span}\{f_n:n\in{I_+}\}$ and $M_-=\overline{span}\{f_n:n\in{I_-}\}$. Here $M_+$ and $M_-$ are maximal uniformly $J$-positive and $J$-negative subspace respectively such that $\textbf{\textit{K}}=M_+\bigoplus{M_-}$. Now, $\{f_{m_{k}}\}$ is also a $J$-frame for $\textbf{\textit{K}}$. So let $I_{+}^{\prime}=\{m_k:[f_{m_{k}},f_{m_{k}}]>{0}\}$ and
$I_-^\prime=\{m_k:[f_{m_{k}},f_{m_{k}}]<0\}$. We also have $M_+^1=\overline{span}\{f_{m_{k}}:m_k\in{I_+}\}=M_+$ and similarly $M_-^1=\overline{span}\{f_{m_{k}}:m_k\in{I_-}\}=M_-$. Now $\{f_{n_{k}}\}$ is a subsequence of $\{f_n:n\in\mathbb{N}\}$. So let $I_{+}^{\prime\prime}=\{n_k:[f_{n_{k}},f_{n_{k}}]>{0}\}$ and $I_-^{\prime\prime}=\{n_k:[f_{n_{k}},f_{n_{k}}]<0\}$. So $\{f_{n_{k}}:n_k\in{I_+^{\prime\prime}}\}\subset{M_+}$ and $\{f_{n_{k}}:n_k\in{I_-^{\prime\prime}}\}\subset{M_-}$. If it is a $J$-frame for $\textbf{\textit{K}}$, then we obviously have $\overline{span}\{f_{n_{k}}:n_k\in{I_+^{\prime\prime}}\}={M_+}$ and $\overline{span}\{f_{n_{k}}:n_k\in{I_-^{\prime\prime}}\}={M_-}$.\\
Conversely if $\overline{span}\{f_{n_{k}}:n_k\in{I_+^{\prime\prime}}\}={M_+}$ and $\overline{span}\{f_{n_{k}}:n_k\in{I_-^{\prime\prime}}\}={M_-}$. Then $\{f_{n_{k}}\}$ is a $J$-frame for $\textbf{\textit{K}}$.
\end{proof}
\begin{theorem}
Let $\{f_n\}$ be any $J$-frame for $\textbf{\textit{K}}$ and let $\{m_k\}$, $\{n_k\}$ be two infinite increasing sequences with $\{m_k\}\cup\{n_k\}=\mathbb{N}$. Let $\textbf{\textit{K}}^{\prime}=\overline{span}\{f_{m_k}\}{\cap}\overline{span}\{f_{n_k}\}$.\\
If $\textbf{\textit{K}}^{\prime}$ is a finite dimensional Krein space, then $\{f_{m_{k}}\}$ and $\{f_{n_{k}}\}$ are $J$-frame sequences for $\textbf{\textit{K}}$.
\end{theorem}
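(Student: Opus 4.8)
The plan is to push the maximal-definite decomposition of the ambient $J$-frame down onto each subsequence. I begin by recording the sign data of the full frame: put $I_+=\{n:[f_n,f_n]>0\}$, $I_-=\{n:[f_n,f_n]<0\}$, $M_+=\overline{span}\{f_n:n\in I_+\}$ and $M_-=\overline{span}\{f_n:n\in I_-\}$. Because $\{f_n\}$ is a $J$-frame for $\textbf{\textit{K}}$, the subspace $M_+$ is maximal uniformly $J$-positive, $M_-$ is maximal uniformly $J$-negative, and $\textbf{\textit{K}}=M_+\oplus M_-$ is a topological direct sum meeting only in $0$. The one elementary fact I would isolate is an inheritance principle: any subspace of a uniformly $J$-positive subspace is again uniformly $J$-positive, since the inequality $[x,x]\geq\epsilon\|x\|_J^2$ survives restriction, and symmetrically for negativity.

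Next I would split each subsequence by sign. For $\{f_{m_k}\}$ set $I_+^{m}=\{m_k:[f_{m_k},f_{m_k}]>0\}$ and $I_-^{m}=\{m_k:[f_{m_k},f_{m_k}]<0\}$ (there are no neutral terms, by the standing hypothesis of this subsection), and put $M_+^{m}=\overline{span}\{f_{m_k}:m_k\in I_+^m\}$, $M_-^{m}=\overline{span}\{f_{m_k}:m_k\in I_-^m\}$. Every positive term $f_{m_k}$ is a positive term of the original frame, hence lies in $M_+$, so $M_+^{m}\subseteq M_+$; likewise $M_-^{m}\subseteq M_-$. By the inheritance principle $M_+^m$ is uniformly $J$-positive and $M_-^m$ is uniformly $J$-negative, which is exactly what Definition 3.8 demands for $\{f_{m_k}\}$ to be a $J$-frame sequence. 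Repeating the argument verbatim for $\{f_{n_k}\}$ settles the second assertion at the level of uniform definiteness.

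It remains to see that each subsequence is in fact a $J$-frame for its own closed span, so that these spans are genuine Krein spaces and the $M_\pm$-parts sit as maximal definite subspaces inside them; this is where the hypothesis on $\textbf{\textit{K}}'$ is used. The positive angle between $M_+$ and $M_-$ inherited from the topological sum $\textbf{\textit{K}}=M_+\oplus M_-$ forces $M_+^m+M_-^m$ to be closed, so $W_m:=\overline{span}\{f_{m_k}\}=M_+^m\oplus M_-^m$; maximality of $M_+^m$ inside $W_m$ is then immediate, since a uniformly $J$-positive $L$ with $M_+^m\subsetneq L\subseteq W_m$ would, upon decomposing a vector of $L\setminus M_+^m$, contain a nonzero element of $M_-^m$ and thereby violate its own positivity, and symmetrically for $M_-^m$. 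The genuinely delicate point is regularity: I would deduce that $W_m$ and $W_n$ are regular — non-degenerate and projectively complete, hence Krein spaces — from the facts that $\overline{W_m+W_n}=\textbf{\textit{K}}$ while $W_m\cap W_n=\textbf{\textit{K}}'$ is a finite-dimensional Krein space, via the calculus of regular subspaces (cf. Theorem 7.16 of \cite{isty}). This regularity step is the main obstacle I anticipate: a direct sum of a uniformly positive and a uniformly negative subspace need not be non-degenerate, and without finite-dimensionality of $\textbf{\textit{K}}'$ the closed spans could harbour neutral directions and fail to be Krein spaces — precisely the degeneracy already flagged in Example 3.3. By contrast, the uniform-definiteness half of the statement is routine subspace inheritance.
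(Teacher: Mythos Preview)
Your first two paragraphs correctly observe that the uniform-definiteness requirement of Definition~3.8 follows by plain subspace inheritance from $M_{\pm}$, and you are right that this part never touches the hypothesis on $\textbf{\textit{K}}'$. The paper, however, treats ``$J$-frame sequence'' in the stronger sense of being a $J$-frame for its closed span, and its proof accordingly establishes the missing \emph{lower frame inequalities}; that is where finite-dimensionality of $\textbf{\textit{K}}'$ is actually used.

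The route you take in your third paragraph is not the paper's and has a genuine gap. The paper does not argue regularity of $W_m=\overline{\mathrm{span}}\{f_{m_k}\}$ abstractly. Instead it works entirely inside the Hilbert space $(M_+,[\,\cdot\,,\cdot\,])$ (and symmetrically in $(M_-,-[\,\cdot\,,\cdot\,])$): it decomposes the finite-dimensional Krein space $\textbf{\textit{K}}'=M_+'\oplus M_-'$ with $M_\pm'\subset M_\pm$, selects a \emph{finite} subfamily $\{f_{l_k}\}$ from the positive part of the given subsequence with $\mathrm{span}\{f_{l_k}\}=M_+'$, and then obtains $\sum_k|[f,f_{n_k}]|^2\ge\min\{A_1,A_1'\}\,[f,f]$ by a three-way case split according as $f[\perp]M_+'$, $f\in M_+'$, or $f$ splits as a sum of such pieces. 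Your proposed deduction of regularity of $W_m$ from ``$\overline{W_m+W_n}=\textbf{\textit{K}}$ and $W_m\cap W_n$ is a finite-dimensional Krein space'' is not supported by Theorem~7.16 of \cite{isty} (which merely characterises projectively complete subspaces as Krein spaces) nor by any standard calculus of regular subspaces; a direct sum of a uniformly positive and a uniformly negative closed subspace can still be degenerate, and nothing in the hypotheses you list excludes this. If you want to match the paper's conclusion you should drop the abstract regularity attempt and replace it by a direct lower-bound estimate in $(M_+,[\,\cdot\,,\cdot\,])$ exploiting the finite dimension of $M_+'$.
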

\begin{proof}
$\{f_n:n\in\mathbb{N}\}$ is a $J$-frame for $\textbf{\textit{K}}$, so let $I_+=\{n\in\mathbb{N}:[f_n,f_n]>{0}\}$ and
$I_-=\{n\in\mathbb{N}:[f_n,f_n]<0\}$. Also let $M_+=\overline{span}\{f_n:n\in{I_+}\}$ and $M_-=\overline{span}\{f_n:n\in{I_-}\}$. Then we have $\textbf{\textit{K}}=M_+\bigoplus{M_-}$. Now $\{f_{n_{k}}\}$ is a subsequence of $\{f_n:n\in\mathbb{N}\}$. So let $I_{+}^{\prime}=\{n_k:[f_{n_{k}},f_{n_{k}}]>{0}\}$ and $I_-^{\prime}=\{n_k:[f_{n_{k}},f_{n_{k}}]<0\}$. So $\{f_{n_{k}}:n_k\in{I_+^{\prime}}\}\subset{M_+}$ and $\{f_{n_{k}}:n_k\in{I_-^{\prime}}\}\subset{M_-}$. Now since $\textbf{\textit{K}}^{\prime}$ is a finite-dimensional Krein space so let $\textbf{\textit{K}}^{\prime}=M_+^\prime\bigoplus{M_-^\prime}$, where $M_+^\prime$ and $M_-^\prime$ are maximal uniformly $J$-positive and maximal uniformly $J$-negative subspace respectively. Also $M_+^\prime\subset{M_+}$ and $M_-^\prime\subset{M_-}$.
Now let $\{l_k\}\subset{I_+^\prime}$ be any finite subsequence of $\{n_k\}\subset{I_+^\prime}$ such that
 $M_+^\prime=span\{f_{l_{k}}\}$. Let $\{f_{l_{k}}\}$ be a frame for the Hilbert space $(M_+^\prime,[~.~])$ and $0<A^\prime\leq{B}^\prime$ be the bounds of the frame $\{f_{l_{k}}\}$. Now let $f\in{span\{f_{l_{k}}\}}$ be any element. Now, if $f{[\perp]}M_+^\prime$, then for all $f\in{M_+}$,
\begin{equation*}
\sum|[f,f_n]|^2=\sum|[f,f_{n_k}]|^2\geq~A_1[f,f]~ (\textmd{here}~A_1>0)
\end{equation*}
Also, if $f{\in}M_+^\prime$ then
\begin{equation*}
\sum|[f,f_{n_k}]|^2\geq~\sum|[f,f_{l_k}]|^2\geq~A_1^\prime[f,f]
\end{equation*}
Otherwise, we have
$f=\sum\alpha_kf_{n_k}=f^\prime+f^{\prime\prime}$,\\
here $i{\in}~\{n_k\}\setminus\{l_k\},~j{\in}~\{l_k\}$
and $f^\prime{[\perp]}M_+^\prime$ and $f^{\prime\prime}{\in}M_+^\prime$.\\
Thus,

\begin{equation*}
\begin{split}
		\sum|[f,f_{n_k}]|^2 & =\sum|[f,f_i]|^2+\sum|[f,f_j]|^2,~i\in\{n_k\}-\{l_k\},~j\in\{l_k\}\\
		& =\sum|[f^\prime,f_i]|^2+\sum|[f^{\prime\prime},f_j]|^2\\
		& \geq~A_1[f^\prime,f^\prime]+A_1^\prime[f^{\prime\prime},f^{\prime\prime}]\\
		& \geq~min\{A_1,A_1^\prime\}[f,f]
\end{split}
\end{equation*}

Hence $\{f_{n_k}:n_k\in{I_+^\prime}\}$ is a frame sequence in $(M_+,[~.~])$. Similarly we can show that $\{f_{n_k}:n_k\in{I_-^\prime}\}$ is a frame sequence in $(M_-,[~.~])$. Hence $\{f_{n_k}\}$ is a $J$-frame sequence in $\textbf{\textit{K}}$.\\
By similar arguments as above we can show that $\{f_{m_k}\}$ is also a $J$-frame sequence in $\textbf{\textit{K}}$.
\end{proof}
\begin{theorem}
Let $\{f_n:n\in{\mathbb{N}}\}$ be a $J$-frame for $\textbf{\textit{K}}$ with optimal bounds $B_2\leq{A}_2<0<A_1\leq{B}_1$
such that $f_n\neq~0~\forall~n\in{\mathbb{N}}$. If for every infinite increasing sequence $\{n_k\}\subset{\mathbb{N}}$, $\{f_{n_k}\}$
is a $J$-frame sequence with optimal bounds $B_2\leq{A}_2<0<A_1\leq{B}_1$, then $\{f_n\}$ is an exact $J$-frame.
\end{theorem}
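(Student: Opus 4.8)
The plan is to argue by contradiction, reducing everything to the positive part and its frame operator in the associated Hilbert space. Suppose $\{f_n\}_{n\in{\mathbb N}}$ were not exact. Then by definition there is an index $n_0$ for which the reduced family $\{f_n\}_{n\ne n_0}$ is still a $J$-frame for $\textbf{\textit{K}}$. Since throughout this subsection the $f_n$ have no non-zero neutral part, either $[f_{n_0},f_{n_0}]>0$ or $[f_{n_0},f_{n_0}]<0$; I would carry out the first case and recover the second verbatim by replacing $J$ with $-J$, just as in the proofs of Theorems 3.1 and 3.2. Enumerating ${\mathbb N}\setminus\{n_0\}$ as an increasing sequence makes the standing hypothesis applicable, so $\{f_n\}_{n\ne n_0}$ is a $J$-frame sequence with the same optimal bounds $B_2\le A_2<0<A_1\le B_1$. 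As it is assumed to be a $J$-frame for all of $\textbf{\textit{K}}$, its positive span is contained in $M_+=\overline{span}\{f_n:n\in I_+\}$ and is itself maximal uniformly $J$-positive, hence equals $M_+$; thus on $M_+$ it obeys the positive frame inequality with exactly the same optimal constants $A_1,B_1$ as the full family.

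I would then move to the associated Hilbert space. Because $M_+$ is uniformly $J$-positive, $[~.~]$ and $[~.~]_J$ are equivalent on it (Theorem 7.19 of \cite{isty}), so $(M_+,[~.~])$ is a Hilbert space and $\{f_n:n\in I_+\}$ is an ordinary frame for it. Writing its frame operator $S_+f=\sum_{n\in I_+}[f,f_n]f_n$, this is a bounded, positive, invertible operator whose spectrum has smallest point $A_1$ and largest point $B_1$. Removing $f_{n_0}$ replaces $S_+$ by the downward rank-one perturbation $S_+'=S_+-[\,\cdot\,,f_{n_0}]f_{n_0}$, and by the first paragraph the spectrum of $S_+'$ must again have smallest point $A_1$ and largest point $B_1$.

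The contradiction is transparent in the tight case $A_1=B_1$: there $S_+=A_1 I$, so $S_+'=A_1 I-[\,\cdot\,,f_{n_0}]f_{n_0}$ has smallest spectral value $A_1-[f_{n_0},f_{n_0}]<A_1$ because $f_{n_0}\ne 0$, contradicting the conclusion that this smallest value is $A_1$. I therefore expect the crux of the proof to be the reduction of the general case to this tight one, and this is precisely where the full hypothesis — every infinite subsequence, not merely $\{f_n\}_{n\ne n_0}$, being a frame sequence with the same optimal bounds — has to be used. A single downward rank-one perturbation is genuinely insufficient in infinite dimensions, since when $A_1$ or $B_1$ lies in the essential spectrum of $S_+$ and $f_{n_0}$ is asymptotically orthogonal to the relevant approximate eigenvectors the extreme spectral points do not move; the one-removal comparison only yields the weaker information that a minimizing sequence for $A_1$ is asymptotically $J$-orthogonal to $f_{n_0}$.

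To close this gap I would isolate individual frame vectors by passing to cleverly chosen subsequences. Whenever $I_-$ is infinite, the subsequence indexed by $\{n\}\cup I_-$ is infinite yet has positive part the single vector $f_n$; its one-dimensional positive frame operator is multiplication by $[f_n,f_n]$, whose only spectral point is $[f_n,f_n]$, and the hypothesis forces this to equal both $A_1$ and $B_1$. Hence $A_1=B_1=[f_n,f_n]$ for every $n\in I_+$, and symmetrically $A_2=B_2$ on $M_-$, collapsing the problem to the tight, equal-norm case handled above. The delicate points I would have to argue most carefully are exactly the degenerate index configurations (for instance when one of $I_+,I_-$ is finite, so that this isolation must be arranged from the other definite part) and the verification that such a one-element positive family is admitted as a $J$-frame sequence at all; the remaining manipulations are the routine frame-operator estimates already appearing in the proof of Theorem 3.1.
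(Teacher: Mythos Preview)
Your approach diverges from the paper's. Like you, the paper begins by removing the redundant index $m$ (say $m\in I_+$), applies the hypothesis to the increasing enumeration of ${\mathbb N}\setminus\{m\}$, and records that both $\{f_i:i\in I_+\}$ and $\{f_i:i\in I_+\setminus\{m\}\}$ are frames for the Hilbert space $(M_+,[~.~])$ with the same optimal bounds $A_1,B_1$. But the paper then finishes in a single line---``by the frame inequality, $|[f,f_m]|^2=0$ for all $f\in M_+$''---and concludes $f_m=0$; no further subsequences are invoked, and no reduction to the tight case is made.

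The worry you raise about that step is well placed: in infinite dimensions two frames for the same space differing by a single nonzero vector can share optimal bounds (on $\ell^2$, adjoin $2^{-1/2}e_1$ to the frame $\{e_n\}_{n\ge 1}\cup\{n^{-1/2}e_n\}_{n\ge 2}$; both families have optimal bounds $1$ and $\tfrac32$), so the paper's final implication does not follow from the two frame inequalities alone. Your isolation device---subsequences of the form $\{n\}\cup I_-$, forcing $A_1=B_1=[f_n,f_n]$---actually exploits the full strength of the ``every infinite subsequence'' hypothesis and circumvents this difficulty, at the price of the degenerate-index bookkeeping you flag (notably when one of $I_\pm$ is finite). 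In short, your route is longer, but it supplies an argument precisely where the paper's proof is, at best, elliptical.
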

\begin{proof}
Since $\{f_n:n\in{\mathbb{N}}\}$ is a $J$-frame for $\textbf{\textit{K}}$, so let $I_+=\{n\in{\mathbb{N}}:[f_n,f_n]\geq{0}\}$ and $I_-=\{n\in{\mathbb{N}}:[f_n,f_n]<0\}$. Now suppose $\{f_n\}$ is not exact. Without any loss of generality let us assume that there exists an $m\in{I_+}$ such that $f_m{\in~}\overline{span}\{f_i:i\in{I_+\setminus\{m\}}\}$. Let $\{n_k\}$ be an increasing sequence given by $n_k=k$, $k=~1,~2,~\ldots,m-1$ and $n_k=k+1$, $k =m,m+1,\ldots$. So for that $\{n_k\}\subset{\mathbb{N}}$, we have $\{f_{n_k}\}$ is also a $J$-frame sequence for $\textbf{\textit{K}}$ with bounds $B_2\leq{A}_2<0<A_1\leq{B}_1$. Let $M_+=\overline{span}\{f_n:n\in{I_+}\}$ and $\{f_n:n\in{I_+}\}$ is a frame for the Hilbert space $(M_+,[~.~])$ with optimal frame bounds $0<A_1\leq{B}_1$. Let $M_+^1=\overline{span}\{f_i:i\in{I_+\setminus\{m\}}\}$. Now $M_+^1$ is a closed subspace of the Hilbert space $(M_+,[~.~])$ such that $M_+^1=M_+$ and $\{f_i:i\in{I_+\setminus\{m\}}\}$ is a frame for $(M_+,[~.~])$ with the same frame bound.\\
Therefore, by frame inequality we have, $|[f,f_m]|^2=0~\forall~f\in~M_+$.
This gives $f_m=0$, which is a contradiction.\\
Hence the proof.
\end{proof}

{\bf Acknowledgement.}  The first author would like to thank  CSIR, Govt. of India for the finanical support. 

\bibliographystyle{amsplain}

\end{document}